\def\th@mytheorem{%
  \let\thm@indent\noindent
  \thm@headfont{\bfseries}
    \itshape
}
\def\th@myremark{%
  \let\thm@indent\noindent
  \thm@headfont{\bfseries}
}
\theoremstyle{mytheorem}
\newtheorem{Theorem}{Theorem}[section]
\newtheorem{Lemma}[Theorem]{Lemma}
\theoremstyle{myremark}
\newtheorem{Remark}[Theorem]{Remark}
\def\bs{\boldsymbol}
\def\bu{$\bullet$\quad}
\def\BT{\bs{T}}
\def\BU{\bs{U}}
\def\BX{\bs{X}}
\def\BY{\bs{Y}}
\def\Cal{\mathcal}
\def\Case#1{{\rm(#1)}}
\def\CO{\mathcal O}
\def\CC{\mathbb C}
\def\CF{\mathcal F}
\def\halfskip{\vskip 6pt plus 1pt minus 1pt}
\def\NN{\mathbb N}
\def\Omega{\varOmega}
\let\phi=\varphi
\def\PSH{\mathcal{PSH}}
\def\PP{\mathbb P}
\def\pr{\operatorname{pr}}
\def\ttimes{\times\dots\times}
\def\too{\longrightarrow}
\def\tuu{\longmapsto}
\def\wdht{\widehat}
\def\wdtl{\widetilde}
\begin{document}
\title
{Cross theorem with singularities \\  pluripolar vs.~analytic case}

\author{Marek Jarnicki}
\address{Jagiellonian University, Institute of Mathematics,
{\L}ojasiewicza 6, 30-348 Krak\'ow, Poland} \email{Marek.Jarnicki@im.uj.edu.pl}

\author{Peter Pflug}
\address{Carl von Ossietzky Universit\"at Oldenburg, Institut f\"ur Mathematik,
Postfach 2503, D-26111 Oldenburg, Germany}
\email{pflug@mathematik.uni-oldenburg.de}
\thanks{The research was partially supported by
the grant no.~N N201 361436 of the Polish Ministry of Science and Higher Education
and the DFG-grant 436POL113/103/0-2.}

\subjclass{32D15}

\keywords{separately holomorphic function, cross theorem with singularities}

\begin{abstract}
We prove that in the extension theorem for separately holomorphic functions on an $N$--fold
cross with singularities the case of analytic singularities follows from the case of pluripolar
singularities.
\end{abstract}

\maketitle


\section{Introduction. Main result}
Throughout the paper we will work in the following geometric context ---
details may be found in \cite{JarPfl2007}, see also \cite{JarPfl2003a}, \cite{JarPfl2003b}.

We fix an integer $N\geq2$ and let $D_j$ be a (connected) \textit{Riemann domain of holomorphy}
over $\CC^{n_j}$, $j=1,\dots,N$. Let $\varnothing\neq A_j\subset D_j$ be \textit{locally pluriregular},
$j=1,\dots,N$.

We will use the following conventions. For arbitrary $B_j\subset D_j$, $j=1,\dots,N$, we write
$B'_j:=B_1\ttimes B_{j-1}$, $j=2,\dots,N$, $B''_j:=B_{j+1}\ttimes B_N$, $j=1,\dots,N-1$.
Thus, for each $j\in\{1,\dots,N\}$, we may write $B_1\ttimes B_N=B'_j\times B_j\times B''_j$
(with natural exceptions for $j\in\{1,N\}$). Analogously, a point
$a=(a_1,\dots,a_N)\in D_1\ttimes D_N$ will be frequently written as $a=(a'_j,a_j,a''_j)$, where
$a'_j:=(a_1,\dots,a_{j-1})$, $a''_j:=(a_{j+1},\dots,a_N)$ (with obvious exceptions for
$j\in\{1,N\}$).

We define an \textit{$N$--fold cross}
$$
\BX=\BX(D_1,\dots,D_N; A_1,\dots,A_N)=\BX((D_j,A_j)_{j=1}^N):=\bigcup_{j=1}^N A'_j\times D_j\times A''_j.
$$
One may prove that $\BX$ is connected.

More generally, for arbitrary \textit{pluripolar} sets $\Sigma_j\subset A'_j\times A''_j$,
$j=1,\dots,N$, we define an \textit{$N$--fold generalized cross}
\begin{multline*}
\BT=\BT(D_1,\dots,D_N; A_1,\dots,A_N; \Sigma_1,\dots,\Sigma_N)=\BT((D_j,A_j,\Sigma_j)_{j=1}^N):\\
=\bigcup_{j=1}^N
\Big\{(a'_j,z_j,a''_j)\in A'_j\times D_j\times A''_j: (a'_j,a''_j)\notin\Sigma_j\Big\}\subset\BX.
\end{multline*}
We say that \textit{$\BT$ is generated by $\Sigma_1,\dots,\Sigma_N$}.
Obviously, $\BX=\BT((D_j,A_j,\varnothing)_{j=1}^\infty)$.

Observe that any $2$--fold
generalized cross is in fact a $2$--fold cross, namely
\begin{multline*}
\BT(D_1,D_2;A_1,A_2;\Sigma_1,\Sigma_2)=
(D_1\times(A_2\setminus\Sigma_1))\cup((A_1\setminus\Sigma_2)\times D_2)\\
=\BX(D_1,D_2;A_1\setminus\Sigma_2,A_2\setminus\Sigma_1).
\end{multline*}
Notice that for $N\geq3$ the geometric structure of $\BT$ is essentially different.

\halfskip

Let $h_{A_j,D_j}$ denote the relative extremal function of $A_j$ in $D_j$, $j=1,\dots,N$.
Recall that
$$
h_{A,D}:=\sup\{u\in\PSH(D): u\leq1,\;u|_A\leq0\}.
$$
Put
$$
\wdht{\BX}:=\{(z_1,\dots,z_N)\in D_1\ttimes D_N: h^\ast_{A_1,D_1}(z_1)+\dots+h^\ast_{A_N,D_N}(z_N)<1\},
$$
where ${}^\ast$ stands for the upper semicontinuous regularization.
One may prove that $\wdht{\BX}$ is a (connected) domain of holomorphy and $\BX\subset\wdht{\BX}$.

\halfskip

Let $M\subset\BT$ be \textit{relatively closed}.
We say that a function $f:\BT\setminus M\too\CC$ is \textit{separately holomorphic on $\BT\setminus M$}
(we write $f\in\CO_s(\BT\setminus M)$) if for any
$j\in\{1,\dots,N\}$ and $(a'_j,a''_j)\in (A'_j\times A''_j)\setminus\Sigma_j$,
the function
$
D_j\setminus M_{(a'_j,\cdot,a''_j)}\ni z_j\tuu f(a'_j,z_j,a''_j)\in\CC
$
is holomorphic in $D_j\setminus M_{(a'_j,\cdot,a''_j)}$, where
$
M_{(a'_j,\cdot,a''_j)}:=\{z_j\in D_j: (a'_j,z_j,a''_j)\in M\}
$
is the \textit{fiber of $M$ over $(a'_j,a''_j)$}.

We are going to discuss the following extension theorem with singularities proved in
\cite{JarPfl2003a}, \cite{JarPfl2003b}, see also \cite{JarPfl2007}.

\begin{Theorem}[Extension theorem with singularities for crosses]\label{ThmExtX}
Under the above assumptions, let $\BT\subset\BX$ be an $N$--fold generalized cross
and let $M\subset\BX$ be a relatively closed set such that

\Case{\dag}\quad for all $j\in\{1,\dots,N\}$ and
$(a'_j,a''_j)\in(A'_j\times A''_j)\setminus\Sigma_j$, the fiber $M_{(a'_j,\cdot,a''_j)}$ is pluripolar.

Then there exist an $N$--fold generalized cross $\BT'\subset\BT$
(generated by pluripolar sets $\Sigma'_j\subset A'_j\times A''_j$ with
$\Sigma'_j\supset\Sigma_j$, $j=1,\dots,N$)
and a relatively closed pluripolar set $\wdht M\subset\wdht{\BX}$ such that:
\begin{enumerate}[{\rm(A)}]
\item\label{A} $\wdht M\cap\BT'\subset M$,

\item\label{B} for every $f\in\CO_s(\BX\setminus M)$ the exists an $\wdht f\in\CO(\wdht{\BX}\setminus\wdht M)$
such that $\wdht f=f$ on $\BT'\setminus M$,

\item\label{C} the set $\wdht M$ is minimal in that sense that each point of $\wdht M$
is singular with respect to the family $\wdht\CF:=\{\wdht f: f\in\CO_s(\BX\setminus M)\}$
--- cf.~\cite{JarPfl2000}, \S\;3.4,

\item\label{D} if for any $j\in\{1,\dots,N\}$ and
$(a'_j,a''_j)\in(A'_j\times A''_j)\setminus\Sigma_j$, the fiber is thin, then $\wdht M$ is analytic
in $\wdht{\BX}$ (and in view of {\rm(\ref{C})}, either $\wdht M=\varnothing$ or
$\wdht M$ must be of pure codimension one --- cf.~\cite{JarPfl2000}, \S\;3.4),

\item\label{E} if $M=S\cap\BX$, where $S\varsubsetneq U$ is an analytic subset of an open
connected neighborhood $U\subset\wdht{\BX}$ of $\BX$, then $\wdht M\cap U_0\subset S$ for an
open neighborhood $U_0\subset U$ of $\BX$ and $\wdht f=f$ on $\BX\setminus M$ for every
$f\in\CO_s(\BX\setminus M)$,

\item\label{F} in the situation of \Case{E}, if $U=\wdht{\BX}$, then $\wdht M$ is the union
of all one codimensional irreducible components of $S$.
\end{enumerate}
\end{Theorem}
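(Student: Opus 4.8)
The plan is to treat the pluripolar case as already settled and to read off the analytic refinements from it, which is precisely the reduction announced in the abstract. Concretely, I would take the sub-cross $\BT'$, the relatively closed pluripolar set $\wdht M\subset\wdht{\BX}$ and the extensions $\wdht f$ furnished by the pluripolar version of the theorem (\cite{JarPfl2003a}, \cite{JarPfl2003b}, \cite{JarPfl2007}), so that (\ref{A}), (\ref{B}) and the minimality (\ref{C}) are in hand, and then upgrade the conclusion ``$\wdht M$ pluripolar'' to ``$\wdht M$ analytic'' under the successively stronger hypotheses of (\ref{D}), (\ref{E}), (\ref{F}). All the work is in this last upgrade.

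For (\ref{D}), assume every fiber $M_{(a'_j,\cdot,a''_j)}$ is thin. By (\ref{A}) we have $\wdht M\cap\BT'\subset M$, so for each $j$ and each $(a'_j,a''_j)\in(A'_j\times A''_j)\setminus\Sigma'_j$ the fiber $\wdht M_{(a'_j,\cdot,a''_j)}$ lies in the thin set $M_{(a'_j,\cdot,a''_j)}$ and is therefore itself thin in $D_j$. Since the sets $(A'_j\times A''_j)\setminus\Sigma'_j$ are non-pluripolar (the $A_j$ being locally pluriregular), I would then invoke the slicing criterion for analyticity: a relatively closed pluripolar subset of a domain of holomorphy whose fibers, in each coordinate direction and over a non-pluripolar set of the remaining variables, are thin must be analytic. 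Applied in each of the $N$ coordinate blocks to $\wdht M$, together with the holomorphic function $\wdht f$ on $\wdht{\BX}\setminus\wdht M$ from (\ref{B}), this yields that $\wdht M$ is analytic in $\wdht{\BX}$. The pure-codimension-one statement is then forced by (\ref{C}): an irreducible component of codimension $\geq 2$ would be removable for every $\wdht f$ by the Riemann--Hartogs theorem, so none of its points could be singular for $\wdht\CF$, a contradiction.

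For (\ref{E}), let $M=S\cap\BX$ with $S\varsubsetneq U$ analytic in a connected neighborhood $U\subset\wdht{\BX}$ of $\BX$; the fibers of $M$ are slices of $S$, hence thin, so (\ref{D}) already gives that $\wdht M$ is analytic of pure codimension one. I would next show that no cross point lying off $S$ is singular: if $p\in\BX\setminus S$ then $f\in\CO_s(\BX\setminus M)=\CO_s(\BX\setminus S)$ is holomorphic along the relevant slice near $p$ and $\wdht f=f$ on $\BT'\setminus M$, so $p$ cannot be a singularity of $\wdht\CF$, whence $p\notin\wdht M$ by (\ref{C}). As $\wdht M$ is relatively closed this gives $\wdht M\cap U_0\subset S$ for a suitable neighborhood $U_0\subset U$ of $\BX$. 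In particular $\wdht M\cap\BX\subset S\cap\BX=M$, so $\wdht f$ is holomorphic on a neighborhood in $\wdht{\BX}$ of $\BX\setminus M$; since $\wdht f$ and $f$ are separately holomorphic and agree on $\BT'\setminus M$, which meets each relevant slice in a non-pluripolar set, the identity principle propagates the equality to all of $\BX\setminus M$, giving $\wdht f=f$ there. For (\ref{F}), where $U=\wdht{\BX}$, the inclusion $\wdht M\subset S$ near $\BX$ from (\ref{E}) propagates by analytic continuation of the pure-codimension-one components throughout the connected $\wdht{\BX}$, placing $\wdht M$ inside the union of the one-codimensional components of $S$; for the reverse inclusion I would, near a generic smooth point of each such component $S_0$, use a local holomorphic equation $g$ of $S_0$ and the function $1/g$, restricted to the cross, to produce a member of $\CO_s(\BX\setminus M)$ whose extension is genuinely singular along $S_0$, forcing $S_0\subset\wdht M$ by (\ref{C}).

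The main obstacle is the analyticity step inside (\ref{D}): passing from ``thin fibers over a non-pluripolar parameter set in each direction'' to ``$\wdht M$ analytic in $\wdht{\BX}$'' requires the slicing criterion to be globalized over the Riemann domain of holomorphy $\wdht{\BX}$ and applied simultaneously in all $N$ coordinate blocks, and it is exactly here that the local pluriregularity of the $A_j$ and the minimality (\ref{C}) are indispensable. A secondary delicate point is the reverse inclusion in (\ref{F}), where one must manufacture, for every one-codimensional component of $S$, a globally defined separately holomorphic function on $\BX\setminus M$ whose extension really is singular along that component.
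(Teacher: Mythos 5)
Your overall plan --- import (A)--(D) from the pluripolar theorem and derive (E)--(F) from them --- is exactly the reduction the paper carries out, but the central step of your argument for (E) has a genuine gap. You argue that a point $p\in\BX\setminus S$ cannot be singular for $\wdht\CF$ because $f$ is separately holomorphic near $p$ and $\wdht f=f$ on $\BT'\setminus M$. This does not follow: to exclude $p$ from $\wdht M$ via (C) you must extend $\wdht f$ holomorphically to a full open neighborhood of $p$ in $\wdht{\BX}$, and neither separate holomorphy of $f$ along one slice nor agreement of $\wdht f$ with $f$ on the lower-dimensional set $\BT'\setminus M$ gives that; worse, $p$ need not belong to $\BT'$ at all, since the pluripolar theorem only controls $\wdht f$ on the possibly much smaller cross $\BT'$. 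Even granting $\wdht M\cap\BX\subset S$, the jump to $\wdht M\cap U_0\subset S$ for an \emph{open} neighborhood $U_0$ of $\BX$ is not a consequence of relative closedness ($\BX$ has empty interior in general); one needs the identity principle for analytic sets, and hence one must first produce, for each irreducible component $\wdht M_0$ of $\wdht M$, a nonempty open piece of $\wdht M_0$ contained in $S$. This is where essentially all of the paper's work lies: Lemma \ref{LemQT} shows (via the cross theorem for generalized crosses and a non-continuable function on the complement of $Q$) that every pure codimension-one analytic subset of $\wdht{\BX}$ meets every generalized sub-cross; after reducing to $S=h^{-1}(0)$ and to $U=\wdht{\BX}$, the paper covers $\wdht M_0$ by polydiscs with defining functions, shows that some projection of $\wdht M_0\cap\PP(a_k,\rho_{a_k})$ meets $(A'_j\times A''_j)\setminus\Sigma'_j$ in a non-pluripolar set (otherwise a sub-cross $\BT''$ would miss $\wdht M_0$, contradicting Lemma \ref{LemQT}), and then uses the proper-projection/branched-covering structure to exhibit an open graph piece of $\wdht M_0$ over a locally pluriregular set on which $h$ vanishes, whence $h\equiv0$ on that piece. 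None of this machinery appears in, or is replaceable by, your ``not singular off $S$'' argument. The order of deduction is also reversed relative to the paper: the paper first proves $\wdht M\cap U_0\subset S$ and only then (Lemma \ref{LemMT}, via a local application of the classical cross theorem around points of $A_1\ttimes A_N$ and identity on non-pluripolar slices) deduces $\wdht f=f$ on $\BX\setminus M$.

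Two smaller points. In (F), the function $1/g$ built from a \emph{local} defining equation of a component $S_0$ is not a globally defined element of $\CO_s(\BX\setminus M)$; the paper instead takes a non-continuable $g\in\CO(\wdht{\BX}\setminus S_0)$ (which exists because $\wdht{\BX}\setminus S_0$ is a domain of holomorphy) and applies (E) to $g|_{\BX\setminus M}$, concluding $S_0\subset\wdht M$ from non-continuability. And your proof of (D) rests on an unproved ``slicing criterion for analyticity'' that is essentially the content of the cited cross theorems themselves; in the paper's reduction, (D) is part of the hypotheses imported from the earlier work, not something to be re-derived by such a criterion.
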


Observe that in the situation of \Case{E}, if $M=S\cap\BX$ and \Case{\dag} is satisfied, then
for any $j\in\{1,\dots,N\}$ and $(a'_j,a''_j)\in(A'_j\times A''_j)\setminus\Sigma_j$, the fiber
$M_{(a'_j,\cdot,a''_j)}$ is analytic (in particular, thin) and
therefore, by \Case{D}, the set $\wdht M$ must be analytic.

It has been conjectured (in particular, in  \cite{JarPfl2003b}) that in fact conditions
\Case{E--F} are consequences of \Case{A--D}. Notice that the method of proof of \Case{E--F}
used in \cite{JarPfl2003a} is essentially different than the one of \Case{A--D} in
\cite{JarPfl2003b}. The aim of this paper is to prove this conjecture which finally leads to
a uniform presentation of the cross theorem with singularities. Our main result is the
following theorem.

\begin{Theorem}\label{ThmMain}
Properties \Case{E--F} follow from \Case{A--D}.
\end{Theorem}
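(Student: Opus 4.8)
The plan is to deduce the analytic-singularity conclusions (E)–(F) by feeding the analytic data into the already-proved pluripolar machinery (A)–(D). The key observation, already noted in the excerpt, is that when $M=S\cap\BX$ for an analytic set $S\varsubsetneq U$, every relevant fiber $M_{(a'_j,\cdot,a''_j)}$ is analytic, hence thin, so hypothesis (\dag) holds and moreover the thinness hypothesis of (D) is satisfied. Therefore (A)–(D) already furnish a relatively closed \emph{analytic} set $\wdht M\subset\wdht\BX$ of pure codimension one (or empty), together with the generalized cross $\BT'$ and the extensions $\wdht f$. The task in (E)–(F) is to identify $\wdht M$ with (the codimension-one part of) $S$ near $\BX$ and to upgrade the agreement $\wdht f=f$ from $\BT'\setminus M$ to all of $\BX\setminus M$.

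First I would prove the inclusion $\wdht M\cap U_0\subset S$ for a suitable neighborhood $U_0$ of $\BX$. The idea is local and uses minimality (C) together with the extension property (B). On the open set $U\setminus S$ the function $f$ is already holomorphic (as the restriction of a genuine holomorphic function on a neighborhood, via the analyticity of $S$ and the separate-holomorphy hypothesis), so every $f\in\CO_s(\BX\setminus M)$ extends holomorphically across $U\setminus S$; by minimality (C), no point of $(U\setminus S)\cap\wdht\BX$ can lie in $\wdht M$. Since $\wdht M\subset\wdht\BX$, this gives $\wdht M\cap U\subset S$, and one takes $U_0$ to be $U\cap\wdht\BX$ (or an open neighborhood of $\BX$ inside it). The analyticity of $\wdht M$ from (D) is what makes this clean: two analytic sets are being compared, and the containment on the dense open complement of a thin set propagates.

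Next I would promote $\wdht f=f$ from $\BT'\setminus M$ to $\BX\setminus M$. Both sides are separately holomorphic on $\BX\setminus M$ — $f$ by hypothesis, and $\wdht f|_{\BX\setminus M}$ because $\wdht f$ is holomorphic on $\wdht\BX\setminus\wdht M$ and $\BX\subset\wdht\BX$ with $\wdht M\cap\BX\subset S\cap\BX=M$ by the inclusion just established. They coincide on $\BT'\setminus M$, which by construction meets each slice $\{a'_j\}\times D_j\times\{a''_j\}$ (for $(a'_j,a''_j)\notin\Sigma'_j$) in a nonempty relatively open set. The identity principle slicewise, combined with the pluripolarity of the $\Sigma'_j$ and a standard separately-holomorphic uniqueness argument (the difference $\wdht f-f$ is separately holomorphic on $\BX\setminus M$ and vanishes on the large subcross $\BT'\setminus M$), forces $\wdht f=f$ on all of $\BX\setminus M$. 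Finally, for (F), when $U=\wdht\BX$ the inclusion $\wdht M\subset S$ holds globally; since $\wdht M$ is pure codimension one by (D) and (C) guarantees each of its points is genuinely singular, $\wdht M$ must consist of entire irreducible components of $S$, and it must contain \emph{every} one-codimensional component (otherwise $f$ would fail to extend across it, contradicting that such a component is a genuine singularity locus of some $f\in\CO_s(\BX\setminus M)$).

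I expect the main obstacle to be the minimality argument in (E): turning the geometric statement ``$f$ extends across $U\setminus S$'' into the conclusion ``$\wdht M\cap U\subset S$'' requires knowing that $\wdht M$ is exactly the singularity set of the family $\wdht\CF$, and that one can realize, at each candidate point outside $S$, an extension of \emph{every} member of the family simultaneously. The delicate point is that minimality in (C) is stated for the family $\wdht\CF$, so one must check that holomorphic extendability of each $f$ across a point of $\wdht\BX\setminus S$ lifts to extendability of the corresponding $\wdht f$ there — this uses the uniqueness of holomorphic extension on the connected domain of holomorphy $\wdht\BX$, so that the local extension of $f$ and the global $\wdht f$ agree and glue. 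Getting this gluing correct, and verifying that the neighborhood $U_0$ can be chosen independently of $f$, is where the real care is needed; the rest is the identity principle and elementary dimension bookkeeping for analytic sets.
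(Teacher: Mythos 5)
Your overall reduction — feeding the analytic data into \Case{A--D} via hypothesis (\dag), then splitting the work into the inclusion $\wdht M\cap U_0\subset S$ and the upgrade of $\wdht f=f$ to $\BX\setminus M$ — matches the paper's strategy, and your sketch of \Case{F} is essentially the paper's deduction of \Case{F} from \Case{A--E}. However, the central step, the inclusion $\wdht M\cap U_0\subset S$, is where your argument breaks down. You write that ``on the open set $U\setminus S$ the function $f$ is already holomorphic (as the restriction of a genuine holomorphic function on a neighborhood)'' and then invoke minimality \Case{C} to exclude points of $\wdht M$ from $U\setminus S$. But $f$ is defined only on $\BX\setminus M$, and $\BX$ is a cross, not an open set: for general locally pluriregular $A_j$ (which may be nowhere dense), $\BX$ has empty interior, and $f$ is merely \emph{separately} holomorphic there. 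There is no ambient holomorphic function whose restriction $f$ is --- producing one is precisely the content of the theorem. Consequently the appeal to \Case{C} does not get off the ground: to exclude a point $z_0\in\wdht M\setminus S$ one would have to show that every $\wdht f$ continues across $z_0$, and a priori the only information available is that $\wdht f=f$ on the thin set $\BT'\setminus M$ and that $\wdht M\cap\BT'\subset M\subset S$. Propagating this containment from the nowhere dense cross $\BT'$ to an open set is the real difficulty. The paper handles it by reducing to $U=\wdht{\BX}$ and $S=h^{-1}(0)$, using the lemma that every pure codimension-one analytic subset of $\wdht{\BX}$ meets every generalized cross to find an irreducible component $\wdht M_0$ of $\wdht M$ whose local proper projection covers a non-pluripolar part of the cross data, realizing $\wdht M_0$ locally as a graph over a locally pluriregular set lying in $\BT'$, and then propagating $h=0$ from that graph to an open piece of $\wdht M_0$ by the identity principle. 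None of this is replaced by your remark that ``containment on the dense open complement of a thin set propagates,'' since the containment you actually possess holds only on a cross, not on a dense open set.

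A secondary gap: in promoting $\wdht f=f$ from $\BT'\setminus M$ to $\BX\setminus M$ you assert that $\BT'\setminus M$ meets each slice $\{a'_j\}\times D_j\times\{a''_j\}$ in a nonempty open set. This fails exactly for $(a'_j,a''_j)\in\Sigma'_j\setminus\Sigma_j$ --- the slices discarded when passing from $\BT$ to $\BT'$ --- and those are precisely the slices one must recover. The paper's lemma devoted to this point instead applies the classical (singularity-free) cross theorem on a small polydisc around a point $c=(a'_N,b_N)$ with $b_N\in A_N\setminus M_{(a'_N,\cdot)}$ and $\PP(c,r_0)\cap M=\varnothing$, obtaining a local full extension that is matched with $\wdht f$ on a non-pluripolar set; your ``standard separately-holomorphic uniqueness argument'' would need to be fleshed out along these lines before the slicewise identity principle can be applied.
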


\section{Proof of Theorem \ref{ThmMain}}
Roughly speaking, the main idea of the proof is to show that if $\wdht M\cap\BT'\subset M$,
then $\varnothing\neq\wdht M\cap\Omega\subset S$ for an open set $\Omega\subset\wdht{\BX}$.
We will need the following extension theorems (without singularities).

\begin{Theorem}\label{ThmExtT}
{\rm (a) (Classical cross theorem --- cf.~e.g.~\cite{AleZer2001}.)}
Under the above assumptions, every function $f\in\CO_s(\BX)$ extends
holomorphically to $\wdht{\BX}$.

{\rm (b) (Cross theorem for generalized crosses --- cf.~\cite{JarPfl2003b}, \cite{JarPfl2007}.)}
Under the above assumptions,
every function $f\in\CO_s(\BT)\cap\Cal C(\BT)$ extends holomorphically to $\wdht{\BX}$.
\end{Theorem}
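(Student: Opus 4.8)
The plan is to prove part (a) first and then derive (b) from it by removing the pluripolar exceptional sets.

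\textbf{Reductions for (a).} Since each $D_j$ is a Riemann domain of holomorphy, hence pseudoconvex, I would exhaust $D_j$ from inside by relatively compact pseudoconvex subdomains and $A_j$ by compact locally pluriregular subsets, using that the relative extremal functions $h^\ast_{A_j,D_j}$ pass to the limit under such exhaustions, so that $\wdht{\BX}$ is approximated from inside. After the local--boundedness step below this reduces the statement to compact $A_j$ and an $f$ bounded on $\BX$. One then reduces to $N=2$ by induction: group $(z_2,\dots,z_N)$ into a single variable ranging over the domain of holomorphy $D_2\ttimes D_N$ whose ``axis'' is the lower cross, using the product formula $h^\ast_{A_2,D_2}+\dots+h^\ast_{A_N,D_N}$ for the extremal function of that cross, so that the two--fold theorem together with the inductive hypothesis yields extension to all of $\wdht{\BX}$.

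\textbf{Local boundedness.} The first genuine step is to show that $f\in\CO_s(\BX)$ is locally bounded on $\BX$. I would fix slices in one direction and run a Baire--category argument on the nonpluripolar set $A_j$ (a locally pluriregular set is nonpluripolar) to produce a subset on which the holomorphic slices $f(a'_j,\cdot,a''_j)$ are uniformly bounded, and then spread the bound by the two--constants theorem. This is exactly where local pluriregularity of $A_j$ is used.

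\textbf{The core estimate and the extension.} The engine is the Bernstein--Walsh inequality $|P(z)|\le\|P\|_{A_j}\exp\bigl((\deg P)\,h^\ast_{A_j,D_j}(z)\bigr)$ for polynomials (more precisely, for the relevant sections over the Riemann domain), which encodes how the relative extremal function governs growth away from $A_j$. Normalizing $\sup_{\BX}|f|\le1$, I would approximate the holomorphic slices of $f$ by extremal polynomials in each variable and assemble a double (Hartogs) series whose general term of degree $k$ is controlled, via Bernstein--Walsh in each factor, by a quantity of size $\exp\bigl(k(\sum_j h^\ast_{A_j,D_j}(z_j)-1)\bigr)$; this decays geometrically precisely on $\{\sum_j h^\ast_{A_j,D_j}(z_j)<1\}=\wdht{\BX}$, giving local uniform convergence there. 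The sum is holomorphic on $\wdht{\BX}$ and restricts to $f$ on $\BX$; by Hartogs' theorem on separate analyticity and uniqueness of analytic continuation on the connected domain $\wdht{\BX}$, it is the desired single--valued extension.

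\textbf{Part (b) and the main obstacle.} For (b), given $f\in\CO_s(\BT)\cap\Cal C(\BT)$ with $\BT$ generated by pluripolar $\Sigma_j$, I would use continuity to remove the $\Sigma_j$: along a sequence of admissible base points $(a'_j,a''_j)\notin\Sigma_j$ tending to a point of $\Sigma_j$, the holomorphic slices are uniformly bounded (by the estimate above) and converge locally uniformly, so $f$ extends continuously and separately holomorphically across $\Sigma_j$; since pluripolar sets are removable for such bounded separately--holomorphic data, $\BT$ is upgraded to the full cross $\BX$ and part (a) finishes the proof. The main obstacle throughout is the estimate--plus--construction of part (a): passing from mere separate holomorphy on the lower--dimensional cross to genuine joint holomorphy on the open set $\wdht{\BX}$, for which the interplay between the relative extremal function and Siciak's extremal polynomials is the crucial and delicate ingredient.
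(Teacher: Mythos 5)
The paper does not prove Theorem~\ref{ThmExtT} at all: both parts are quoted as known results, (a) from Alehyane--Zeriahi and (b) from \cite{JarPfl2003b}, \cite{JarPfl2007}, so your attempt can only be measured against the cited literature. For part (a) your outline is the standard classical route (Baire category plus the two-constants theorem for local boundedness, Siciak extremal polynomials and a Bernstein--Walsh--controlled Hartogs series converging exactly on $\{\sum_j h^\ast_{A_j,D_j}<1\}$, induction on $N$ via the product property of relative extremal functions) and is acceptable as a roadmap, although each step compresses serious work --- in particular the product property of $h^\ast$ needed for the inductive grouping of variables is itself a nontrivial theorem, and the passage from compact $A_j$ in $\CC^n$ to general locally pluriregular subsets of Riemann domains is where most of the length of the actual proofs lies.

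Part (b), however, contains a genuine gap. You propose to ``remove'' the pluripolar sets $\Sigma_j$ by a normal-families/removability argument and then invoke (a). But for a point $(a'_j,a''_j)\in\Sigma_j$ the \emph{entire} fiber $\{(a'_j,a''_j)\}\times D_j$ is missing from $\BT$, and near a point $(a'_j,z_j,a''_j)$ with $z_j\in D_j\setminus A_j$ the only points of $\BT$ lie on the $j$-th branch with base points outside $\Sigma_j$; continuity of $f$ on $\BT$ gives no uniform bound on those nearby slices, since nothing prevents $f$ from blowing up as the base point approaches $\Sigma_j$. The ``estimate above'' from part (a) cannot supply the missing bound either, because it presupposes control of $f$ on the full distinguished set $A_1\ttimes A_N$, which is exactly what is not available before the extension is produced --- the argument is circular. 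Note also that for $N=2$ a generalized cross \emph{is} a cross (as the paper points out), so (b) is only at issue for $N\geq3$, precisely where $\BT$ is not a cross and your reduction would have to do all the work; the Remark following Theorem~\ref{ThmExtT}, which records that (b) without the continuity hypothesis is still open for $N\geq3$, is a strong signal that continuity enters the genuine proof structurally (through an induction on $N$ and a gluing over the non-pluripolar sets of Remark~\ref{RemProp}), not as a soft removability device.
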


\begin{Remark}
(a) The assumptions in Theorem \ref{ThmExtT}(b) may be essentially weakened. Namely,
using the same method of proof as in \cite{JarPfl2003b}, one may easily
show that every function $f\in\CO_s(\BT)$ such that
for any $j\in\{1,\dots,N\}$ and $b_j\in D_j$, the function
$A'_j\times A''_j\setminus\Sigma_j\ni (z'_j,z''_j)\tuu f(z'_j,b_j,z''_j)$
is continuous, extends holomorphically to $\wdht{\BX}$.

(b) We point out that it is still an \textit{open problem} whether for $N\geq3$ and arbitrary $\BT$,
Theorem \ref{ThmExtT}(b) remains true for every $f\in\CO_s(\BT)$.
\end{Remark}

\begin{Remark}\label{RemProp}
If for all $j\in\{1,\dots,N\}$ and
$(a'_j,a''_j)\in(A'_j\times A''_j)\setminus\Sigma_j$, the fiber $M_{(a'_j,\cdot,a''_j)}$
is pluripolar, then the sets
$$
\{(a'_j,a_j,a''_j)\in A'_j\times A_j\times A''_j: (a'_j,a''_j)\notin\Sigma_j,\;
a_j\notin M_{(a'_j,\cdot,a''_j)}\},\quad j=1,\dots,N,
$$
are non-pluripolar (cf.~\cite{JarPfl2007}).
\end{Remark}

\begin{Lemma}\label{LemQT}
Let $Q\subset\wdht{\BX}$ be an arbitrary analytic set of pure codimension one and let $\BT\subset\BX$
be an arbitrary generalized cross. Then $Q\cap\BT\neq\varnothing$.
\end{Lemma}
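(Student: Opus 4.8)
The plan is to argue by contradiction: I will assume $Q\cap\BT=\varnothing$ and derive a contradiction by producing a function holomorphic on $\wdht{\BX}\setminus Q$ but singular at a point of $Q$, and then extending its restriction to $\BT$ across that singularity by the cross theorem. To build the singular function, fix $q_0\in Q$; since $Q$ has pure codimension one, on a neighbourhood $V$ of $q_0$ I can write $Q\cap V=\{h=0\}$ with $h\in\CO(V)$, $h\not\equiv0$, so that $1/h\in\CO(V\setminus Q)$ has a pole along $Q$. As $\wdht{\BX}$ is a domain of holomorphy, it is Stein with $H^1(\wdht{\BX},\CO)=0$, so the additive (Cousin~I) problem with principal part $1/h$ near $q_0$ and $0$ elsewhere is solvable; this yields $f\in\CO(\wdht{\BX}\setminus Q)$ with $f-1/h\in\CO(V)$. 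Hence $f$ has a pole at $q_0$ and admits no holomorphic extension to any neighbourhood of $q_0$. Pure codimension one is essential here: for codimension $\geq2$ no such singular function exists, by Hartogs' extension theorem, and indeed the lemma would fail.

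Next I would restrict $f$ to $\BT$ and extend it. Because $Q\cap\BT=\varnothing$ we have $\BT\subset\wdht{\BX}\setminus Q$, so $f|_\BT$ is continuous. For every $j$ and $(a'_j,a''_j)\in(A'_j\times A''_j)\setminus\Sigma_j$ the whole slice $\{a'_j\}\times D_j\times\{a''_j\}$ lies in $\BT\subset\wdht{\BX}$ and therefore avoids $Q$, so $z_j\mapsto f(a'_j,z_j,a''_j)$ is holomorphic on all of $D_j$; thus $f|_\BT\in\CO_s(\BT)\cap\Cal C(\BT)$. By Theorem~\ref{ThmExtT}(b) it extends to some $F\in\CO(\wdht{\BX})$.

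It then remains to identify $F$ with $f$ on $\wdht{\BX}\setminus Q$, which I expect to be the main obstacle. Put $g:=F-f\in\CO(\wdht{\BX}\setminus Q)$; it vanishes on $\BT$, in particular on the $j=1$ part $D_1\times(A''_1\setminus\Sigma_1)$. I would fix a product polydisc $\Delta=\Delta_1\times\Delta''_1\subset\wdht{\BX}\setminus Q$ about a point $(z_1^0,w_0)$ with $w_0\in A''_1\setminus\Sigma_1$. For each $w\in E:=\Delta''_1\cap(A''_1\setminus\Sigma_1)$ one has $g(\cdot,w)\equiv0$ on $\Delta_1$, and $E$ is non-pluripolar because $A''_1=A_2\ttimes A_N$ is locally non-pluripolar (the $A_i$ being locally pluriregular) while $\Sigma_1$ is pluripolar. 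Fixing $z_1\in\Delta_1$, the slice $g(z_1,\cdot)\in\CO(\Delta''_1)$ vanishes on the non-pluripolar set $E$ and hence vanishes identically, so $g\equiv0$ on $\Delta$. Since $\wdht{\BX}\setminus Q$ is connected (the complement of a proper analytic set in a connected manifold), the identity theorem gives $g\equiv0$, i.e.\ $F=f$ on $\wdht{\BX}\setminus Q$. But then $F$ extends $f$ holomorphically across $q_0$, contradicting the pole of $f$ there; hence $Q\cap\BT\neq\varnothing$. The crux is precisely this propagation of the vanishing of $g$ from the geometrically thin cross $\BT$ to a full open set, overcome by coupling the $z_1$--slice vanishing inherited from $\BT$ with the non-pluripolarity of the fibres $A''_1\setminus\Sigma_1$ in the complementary directions.
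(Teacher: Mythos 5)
Your proof is correct and follows essentially the same route as the paper: assume $Q\cap\BT=\varnothing$, restrict a function of $\CO(\wdht{\BX}\setminus Q)$ that is singular along $Q$ to $\BT$, extend it to all of $\wdht{\BX}$ by Theorem~\ref{ThmExtT}(b), and identify the extension with the original function via non-pluripolarity of the cross to reach a contradiction. The only (harmless) variations are that the paper obtains the singular function directly from the fact that $\wdht{\BX}\setminus Q$ is a domain of holomorphy (so it is the domain of existence of some $g$), whereas you build one by solving a Cousin~I problem, and that you spell out the slice-by-slice identity-principle argument that the paper compresses into ``since $\BT$ is non-pluripolar''.
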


\begin{proof}
Suppose that $Q\cap\BT=\varnothing$. Since $Q$ is of pure codimension one,
$\wdht{\BX}\setminus Q$ is a domain of holomorphy, and therefore, there exists
a $g\in\CO(\wdht{\BX}\setminus Q)$ such that $\wdht{\BX}\setminus Q$ is the domain of
existence of $g$. Since
$\BT\subset\wdht{\BX}\setminus Q$, we conclude that $f:=g|_{\BT}\in\CO_s(\BT)\cap\Cal C(\BT)$.
By Theorem \ref{ThmExtT} there exists an $\wdht f\in\CO(\wdht{\BX})$ such that
$\wdht f=f$ on $\BT$. Consequently, since $\BT$ is non-pluripolar,
we conclude that $\wdht f=g$ on $\wdht{\BX}\setminus Q$.
Thus $g$ extends holomorphically to $\wdht{\BX}$; a contradiction.
\end{proof}

\begin{Lemma}\label{LemEF}
Condition \Case{F} follows from \Case{A--E}.

Thus to prove Theorem \ref{ThmMain} we only need to check that \Case{E} follows
from \Case{A--D}.
\end{Lemma}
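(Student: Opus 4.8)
The plan is to reduce \Case{F} to the set-theoretic identity $\wdht M=S^{(1)}$, where $S^{(1)}$ denotes the union of all one-codimensional irreducible components of $S$, and to establish the two inclusions separately. Throughout I work in the situation of \Case{E} with $U=\wdht{\BX}$; since $M=S\cap\BX$, all fibers $M_{(a'_j,\cdot,a''_j)}$ are analytic, hence thin, so \Case{\dag} holds and, as observed after Theorem \ref{ThmExtX}, \Case{C} and \Case{D} force $\wdht M$ to be analytic and either empty or of pure codimension one. Let $U_0\subset\wdht{\BX}$ be the neighborhood of $\BX$ provided by \Case{E}, so that $\wdht M\cap U_0\subset S$.

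For the inclusion $\wdht M\subset S^{(1)}$ I would argue one component at a time. Let $N$ be an irreducible component of $\wdht M$; it is of pure codimension one, so Lemma \ref{LemQT} gives $N\cap\BT'\neq\varnothing$, where $\BT'$ is the cross from \Case{B}. As $\BT'\subset\BX\subset U_0$, the set $N\cap U_0$ is nonempty and, by \Case{E}, contained in $S$. Choosing an irreducible component $V$ of $N\cap U_0$, irreducibility gives $V\subset S_k$ for a single component $S_k$ of $S$; comparing dimensions and using $S\varsubsetneq\wdht{\BX}$ shows that $S_k$ is of codimension one and $\dim V=\dim S_k$. Passing to closures in $\wdht{\BX}$ yields $\overline V=S_k\subset N$, whence $N=S_k$ because $N$ is irreducible of the same dimension. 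Thus every component of $\wdht M$ is a one-codimensional component of $S$, i.e.\ $\wdht M\subset S^{(1)}$.

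For the reverse inclusion $S^{(1)}\subset\wdht M$, fix a one-codimensional irreducible component $S_k$ of $S$. Since $S_k$ is a hypersurface in the domain of holomorphy $\wdht{\BX}$, the complement $\wdht{\BX}\setminus S_k$ is a connected domain of holomorphy, so I may choose $g\in\CO(\wdht{\BX}\setminus S_k)$ such that $\wdht{\BX}\setminus S_k$ is the domain of existence of $g$; in particular $g$ is singular at every point of $S_k$. Using $\BX\subset\wdht{\BX}$ and $S_k\cap\BX\subset M$, a slice-by-slice check shows $g|_{\BX\setminus M}\in\CO_s(\BX\setminus M)$. By \Case{B} it admits an extension $\wdht g\in\CO(\wdht{\BX}\setminus\wdht M)$, and by \Case{E} we have $\wdht g=g$ on $\BX\setminus M$. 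The set $\BX\setminus M$ contains a non-pluripolar subset (Remark \ref{RemProp}) and lies, by the first inclusion, in the connected set $\wdht{\BX}\setminus(S_k\cup\wdht M)$; hence the identity principle gives $\wdht g=g$ throughout $\wdht{\BX}\setminus(S_k\cup\wdht M)$. If $S_k\not\subset\wdht M$, pick $p\in S_k\setminus\wdht M$: near $p$ the function $\wdht g$ is holomorphic and agrees with $g$ off $S_k$, so $g$ would extend holomorphically across $p$, contradicting the choice of $g$. Therefore $S_k\subset\wdht M$, so $S^{(1)}\subset\wdht M$, completing the identity $\wdht M=S^{(1)}$ and hence \Case{F}.

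The concluding assertion of the lemma is then pure bookkeeping: Theorem \ref{ThmMain} claims that \Case{E--F} follow from \Case{A--D}, and since we have just derived \Case{F} from \Case{A--E}, only the implication \Case{A--D}$\Rightarrow$\Case{E} remains. The step I expect to be the main obstacle is the reverse inclusion: one must verify carefully that the transplanted function $g|_{\BX\setminus M}$ really belongs to $\CO_s(\BX\setminus M)$, so that \Case{B} and \Case{E} are applicable, and that the coincidence $\wdht g=g$ on the cross propagates, via non-pluripolarity, to a full neighborhood of a generic point of $S_k$ — which is exactly what forces the singularity of $g$ to persist in $\wdht M$.
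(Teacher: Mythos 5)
Your proof is correct and takes essentially the same route as the paper's: the inclusion $\wdht M\subset S^{(1)}$ via Lemma \ref{LemQT}, the identity principle for analytic sets and a dimension count, and the reverse inclusion via a non-continuable function on the complement of the codimension-one part of $S$ combined with \Case{B}, \Case{E}, non-pluripolarity of $\BX\setminus M$ and the identity principle (the paper treats all codimension-one components of $S$ at once rather than one $S_k$ at a time, a purely cosmetic difference). The only blemishes are notational (reusing $N$ for a component of $\wdht M$) and the inessential overstatement $\overline V=S_k$, where all that is needed is $S_k\subset N$.
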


\begin{proof}
Indeed, let $S\varsubsetneq\wdht{\BX}$ be an analytic set, $M:=S\cap\BX$, and assume that
\Case{A--E} hold true.
Let $S_0$ be the union of all irreducible components of $S$ of codimension one.
Consider two cases:

$S_0\neq\varnothing$: Similarly as in the proof of Lemma \ref{LemQT},
there exists a non-continuable function $g\in\CO(\wdht{\BX}\setminus S_0)$. Then
$f:=g|_{\BX\setminus M}\in\CO_s(\BX\setminus M)$ and, therefore (by \Case{E}),
there exists an $\wdht f\in\CO(\wdht{\BX}\setminus\wdht M)$ with $\wdht f=f$ on $\BX\setminus M$.
Observe that (by \Case{E})
$\BX\setminus M\subset(\wdht{\BX}\setminus\wdht M)\cap(\wdht{\BX}\setminus S)\subset
\wdht{\BX}\setminus(S_0\cup\wdht M)$. The set $\BX\setminus M$ is non-pluripolar
(Remark \ref{RemProp}). Hence
$\wdht f=g$ on $\wdht{\BX}\setminus(S_0\cup\wdht M)$. Since $g$ is non-continuable,
we conclude that $S_0\subset\wdht M$.

The set $\wdht M$, as a non-empty singular set,
must be of pure codimension one. Since $\wdht M\cap U_0\subset S$ and
$Q\cap U_0\neq\varnothing$ for every irreducible component $Q$ of $\wdht M$
(by Lemma \ref{LemQT}), we conclude, using the identity principle for analytic sets, that
$\wdht M\subset S$ (cf.~\cite{Chi1989}, \S\;5.3). Consequently, $\wdht M\subset S_0$.

$S_0=\varnothing$: Suppose that $\wdht M\neq\varnothing$. Then $\wdht M$ must be of pure codimension one.
The above proof of the first part shows that $\wdht M\subset S$.
Since $S_0=\varnothing$, the codimension of $S$ is $\geq2$; a contradiction.
\end{proof}

\begin{Lemma}\label{LemMT}
Suppose that \Case{A--D} are true and in the situation of \Case{E}
we know that $\wdht M\cap\BX\subset M$. Then $\wdht f=f$ on $\BX\setminus M$.

Thus, the proof of \Case{E} reduces to the inclusion $\wdht M\cap U_0\subset S$.
\end{Lemma}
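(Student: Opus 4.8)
The plan is to reduce the asserted functional identity to a uniqueness statement for separately holomorphic functions, and then to prove that statement by induction on $N$. Set $g:=f-\wdht f$ on $\BX\setminus M$. This makes sense: the hypothesis $\wdht M\cap\BX\subset M$ gives $\BX\setminus M\subset\wdht{\BX}\setminus\wdht M$, so $\wdht f$ is defined there. Since $f\in\CO_s(\BX\setminus M)$ and $\wdht f\in\CO(\wdht{\BX}\setminus\wdht M)$ is in particular separately holomorphic, we get $g\in\CO_s(\BX\setminus M)$, and by \Case{B} we have $g=0$ on $\BT'\setminus M$. Moreover, in the situation of \Case{E} each fiber $M_{(a'_j,\cdot,a''_j)}=S_{(a'_j,\cdot,a''_j)}$ is an analytic, in particular thin and hence closed pluripolar, subset of $D_j$ (the Observation following Theorem \ref{ThmExtX}). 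Thus everything reduces to the claim: \emph{if $g\in\CO_s(\BX\setminus M)$ vanishes on $\BT'\setminus M$, where $\BT'$ is generated by pluripolar sets $\Sigma'_j$ and every fiber of $M$ is closed pluripolar, then $g\equiv0$ on $\BX\setminus M$.}

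I would prove this claim by induction on $N$, using repeatedly two facts: the zero set of a holomorphic function on a connected domain is either everything or pluripolar; and, since each $A_j$ is locally pluriregular (hence non-pluripolar) while the $\Sigma'_j$ and the fibers of $M$ are pluripolar, sets of the form $(A_j\setminus\Sigma'_j)$ with a pluripolar set removed stay non-pluripolar (cf.~Remark \ref{RemProp}). For $N=2$ the claim is immediate: given $p=(p_1,p_2)\in\BX\setminus M$ with, say, $p_2\in A_2$, the slice $z_1\mapsto g(z_1,p_2)$ is holomorphic on the connected domain $D_1\setminus M_{(\cdot,p_2)}$ and vanishes on $(A_1\setminus\Sigma'_2)\setminus M_{(\cdot,p_2)}$ — because $(A_1\setminus\Sigma'_2)\times D_2\subset\BT'$ — which is non-pluripolar; hence $g(\cdot,p_2)\equiv0$ and in particular $g(p)=0$.

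For the step $N-1\Rightarrow N$ I would freeze the last variable at a generic value. Since $\BX=\bigcup_j A'_j\times D_j\times A''_j$ and the hypotheses are symmetric in $j$, it suffices to prove $g(p)=0$ for $p\in A'_N\times D_N$ with $p\notin M$; write $p=(p'_N,p_N)$ with $p'_N\in A_1\ttimes A_{N-1}$ ($p''_N$ is void). For $c_N\in A_N$ put $g^{c_N}(z_1,\dots,z_{N-1}):=g(z_1,\dots,z_{N-1},c_N)$ on the $(N-1)$--fold cross $\BX':=\BX((D_j,A_j)_{j=1}^{N-1})$. One checks that $g^{c_N}\in\CO_s(\BX'\setminus M^{c_N})$, that the fibers of $M^{c_N}$ are closed pluripolar (they are fibers of $M$), and that $g^{c_N}$ vanishes on the generalized cross in $\BX'$ generated by the slices $\Sigma''_j:=\{(w'_j,w''_j):(w'_j,w''_j,c_N)\in\Sigma'_j\}$, $j\le N-1$. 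As the slice of a pluripolar set is pluripolar for all $c_N$ outside a pluripolar set $P\subset\CC^{n_N}$ (the finite union of the exceptional sets of the $\Sigma'_j$), the inductive hypothesis applies for every $c_N\in A_N\setminus P$ and gives $g^{c_N}\equiv0$ on $\BX'\setminus M^{c_N}$. Since $p'_N\in\BX'$, this yields $g(p'_N,c_N)=0$ for all $c_N\in(A_N\setminus P)\setminus M_{(p'_N,\cdot)}$, a non-pluripolar set; but $z_N\mapsto g(p'_N,z_N)$ is holomorphic on the connected domain $D_N\setminus M_{(p'_N,\cdot)}$, so $g(p'_N,\cdot)\equiv0$ and $g(p)=g(p'_N,p_N)=0$, completing the induction.

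The step I expect to be the main obstacle is precisely the one that forces the induction. A single direction-$N$ slice through a point $p$ with $(p'_N,p''_N)\in\Sigma'_N$ need not meet $\BT'$ in a non-pluripolar set, because the slices of the pluripolar sets $\Sigma'_i$ ($i\neq N$) through the \emph{fixed} coordinates of $p$ may themselves fail to be pluripolar. The device that resolves this is to freeze the last variable at a \emph{generic} value $c_N$, for which all slices of the $\Sigma'_j$ are pluripolar, apply the lower-dimensional case there, and only afterwards propagate back to the prescribed $p_N$ by a one-variable identity argument along the non-pluripolar set $A_N$. The remaining bookkeeping — that $g^{c_N}$ is separately holomorphic, that $M^{c_N}$ and the $\Sigma''_j$ inherit the required smallness for quasi-every $c_N$, and that each non-pluripolar vanishing set forces the corresponding slice to vanish — is routine given Theorem \ref{ThmExtT}, Remark \ref{RemProp}, and the standard slicing theorem for pluripolar sets.
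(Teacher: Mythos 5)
Your argument is correct in its essentials, but it follows a genuinely different route from the paper's. The paper does not form the difference $g=f-\wdht f$ and does not induct on $N$: instead it enlarges $\BT'$ to a generalized cross $\BT''$ with $\BT''\setminus M=\BX\setminus M$ (by discarding only those $(a'_j,a''_j)$ whose fiber is all of $D_j$), and then, for a point $a=(a'_N,a_N)\in\BT''\setminus M$, picks $b_N\in A_N\setminus M_{(a'_N,\cdot)}$, applies the classical cross theorem (Theorem \ref{ThmExtT}(a)) on a small polydisc $\PP(c,r)$ around $c=(a'_N,b_N)$ disjoint from $M$ to produce a local extension $\wdtl f_c$, invokes the \emph{singularity} of $\wdht M$ (condition \Case{C}) to conclude $\PP(c,r)\cap\wdht M=\varnothing$ and $\wdht f=\wdtl f_c$ there, and finishes with a one--variable identity argument along the $N$-th coordinate. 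Your proof replaces both of these tools (the local cross theorem and the minimality of $\wdht M$) by an induction on $N$ with generic freezing of the last variable, using only the identity principle on non-pluripolar sets and the slicing theorem for pluripolar sets; this is more elementary in its ingredients, at the cost of the inductive bookkeeping. One imprecision you should repair: it is not true that \emph{every} fiber $M_{(a'_j,\cdot,a''_j)}=S_{(a'_j,\cdot,a''_j)}$ is thin --- the Observation after Theorem \ref{ThmExtX} applies only for $(a'_j,a''_j)\notin\Sigma_j$, and over points of $\Sigma_j$ the (analytic) fiber may equal all of $D_j$, so the hypothesis of your inductive claim should read ``each fiber is analytic'' (equivalently, thin or all of $D_j$) rather than ``closed pluripolar.'' This does not break the induction, since full fibers contribute nothing to $\BX\setminus M$ and every fiber you actually slice through passes through a point of $\BX\setminus M$ and is therefore proper, hence thin; the same device is what the paper's $\Sigma''_j$ accomplishes.
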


\begin{proof}
First observe that, in the situation of \Case{A--D}, if $\BT'\subset\BT''\subset\BX$,
where $\BT''$ is generated by pluripolar sets $\Sigma''_j\subset A'_j\times A''_j$ with
$\Sigma''_j\subset\Sigma'_j$, $j=1,\dots,N$, are such that:

\bu for all $j\in\{1,\dots,N\}$ and
$(a'_j,a''_j)\in(A'_j\times A''_j)\setminus\Sigma''_j$, the fiber $M_{(a'_j,\cdot,a''_j)}$
is pluripolar,

\bu $\wdht M\cap\BT''\subset M$,

\noindent then $\wdht f=f$ on $\BT''\setminus M$.

Indeed, fix a point $a\in\BT''\setminus M$. We may assume that
$$
a=(a'_N,a_N)\in(A'_N\setminus\Sigma''_N)\times(D_N\setminus M_{(a'_N,\cdot)}).
$$
Since $\wdht M_{(a'_N,\cdot)}\subset M_{(a'_N,\cdot)}$, the functions
$f(a'_N,\cdot)$ and $\wdht f(a'_N,\cdot)$ are holomorphic in the domain
$D_N\setminus M_{(a'_N,\cdot)}$. It suffices to show that they coincides on a non-pluripolar
subset of $D_N\setminus M_{(a'_N,\cdot)}$.

Take a $b_N\in A_N\setminus M_{(a'_N,\cdot)}$, put
$c=(c_1,\dots,c_N):=(a'_N,b_N)$ and let $r_0>0$ be so small that $\PP(c,r_0)\cap M=\varnothing$,
where $\PP(c,r_0)$ stands for the ``polydisc'' in sense of Riemann domains
(cf.~\cite{JarPfl2000}, \S\;1.1). Applying Theorem \ref{ThmExtT}(a) to the $N$--fold cross
$\BX_c:=\BX((\PP(c_j,r_0),A_j\cap\PP(c_j,r_0))_{j=1}^N)$ shows that there exist
$r\in (0,r_0)$ and $\wdtl f_c\in\CO(\PP(c,r))$       
such that $\wdtl f_c=f$ on $\PP(c,r)\cap\BX_c$. Since $\wdht f=f=\wdtl f_c$ on
the non-pluripolar set $\PP(c,r)\cap\BT'\setminus M$ (cf.~Remark \ref{RemProp})
and $\wdht M$ is singular (cf.~(\ref{D})),
we get $\PP(c,r)\cap\wdht M=\varnothing$ and $\wdht f=\wdtl f_c$ on $\PP(c,r)$.

Finally, $f(a'_N,\cdot)=\wdtl f_c(a'_N,\cdot)=\wdht f(a'_N,\cdot)$ on the non-pluripolar set
$\PP(b_N,r)\cap A_N$.

\halfskip

If $M$ is an analytic subset of $U$, then we may take
\begin{multline*}
\Sigma''_j:=\{(a'_j,a''_j)\in A'_j\times A''_j: M_{(a'_j,\cdot,a''_j)} \text{ is thin}\}\\
=\{(a'_j,a''_j)\in A'_j\times A''_j: M_{(a'_j,\cdot,a''_j)}\neq D_j\}.
\end{multline*}
Observe that $\BT'\subset\BT\subset\BT''$ and $\BT''\setminus M=\BX\setminus M$.
Thus, if we know that $\wdht M\cap\BX\subset M$, then
$\wdht f=f$ on $\BT''\setminus M=\BX\setminus M$.
\end{proof}

\begin{Lemma}
If condition \Case{E} is true with $U=\wdht{\BX}$ (and
arbitrary other elements), then it is true with general $U$.

Thus to prove Theorem \ref{ThmMain} we only need to check that \Case{E} with $U=\wdht{\BX}$
follows from \Case{A--D}.
\end{Lemma}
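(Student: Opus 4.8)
The plan is to deduce condition \Case{E} for a general neighbourhood $U$ from the already-established full-hat case ($U=\wdht{\BX}$) by a localization argument. By Lemma \ref{LemMT} it suffices to produce an open neighbourhood $U_0\subset U$ of $\BX$ with $\wdht M\cap U_0\subset S$: once this inclusion is available we get $\wdht M\cap\BX\subset S\cap\BX=M$, and Lemma \ref{LemMT} then yields $\wdht f=f$ on $\BX\setminus M$, i.e.~the full statement of \Case{E}. So I fix $S\varsubsetneq U$ analytic, put $M=S\cap\BX$, and let $\wdht M\subset\wdht{\BX}$ and $\wdht f\in\CO(\wdht{\BX}\setminus\wdht M)$ be furnished by \Case{A--D}.

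First I would set up local crosses. For a point $c\in\BX$ I choose a radius $r$ so small that the local cross $\BX_c:=\BX((\PP(c_j,r),A_j\cap\PP(c_j,r))_{j=1}^N)$ sits inside $\BX$ and $\wdht{\BX_c}\subset\PP(c,r)\subset U$ (this is certainly possible for $c$ near the ``core'' of $\BX$; the arms are discussed below). Since $S\varsubsetneq U$ is analytic with empty interior, $S_c:=S\cap\wdht{\BX_c}$ is a proper analytic subset of $\wdht{\BX_c}$ and $M_c:=S_c\cap\BX_c=M\cap\BX_c$. As $S_c$ is analytic in \emph{all} of $\wdht{\BX_c}$, the assumed full-hat case of \Case{E} applies to $\BX_c$ and provides a minimal singular set $\wdht{M_c}$ and an extension $\wdht{f_c}$ with $\wdht{M_c}\cap(U_c)_0\subset S_c\subset S$ for a neighbourhood $(U_c)_0$ of $\BX_c$, and $\wdht{f_c}=f$ on $\BX_c\setminus M_c$.

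The key comparison is that $\wdht f$ and $\wdht{f_c}$ coincide off the pluripolar set $\wdht M\cup\wdht{M_c}$. Indeed, by \Case{B} we have $\wdht f=f$ on $\BT'\setminus M$, while $\wdht{f_c}=f$ on $\BX_c\setminus M_c$; hence both equal $f$ on $(\BT'\cap\BX_c)\setminus M$, which is non-pluripolar by Remark \ref{RemProp}. Since $\wdht{\BX_c}\setminus(\wdht M\cup\wdht{M_c})$ is a connected domain on which both functions are holomorphic and on which their difference vanishes on a non-pluripolar set (were the difference not identically zero, its zero locus would be a pluripolar analytic hypersurface), I conclude $\wdht f=\wdht{f_c}$ there. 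Now take any $p\in\wdht M\cap(U_c)_0$ and suppose $p\notin S$. Then $p\notin S_c$, so the local inclusion gives $p\notin\wdht{M_c}$; choosing a neighbourhood $V\ni p$ with $V\cap\wdht{M_c}=\varnothing$, the function $\wdht{f_c}$ is holomorphic on $V$ and agrees with $\wdht f$ on $V\setminus\wdht M$, hence continues $\wdht f$ holomorphically across $V\cap\wdht M$. This works verbatim for every $f\in\CO_s(\BX\setminus M)$, the set $\wdht{M_c}$ depending only on $M_c$, so every member of $\wdht\CF$ extends across $p$, contradicting the minimality of $\wdht M$ in \Case{C}. Therefore $\wdht M\cap(U_c)_0\subset S$.

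Finally I would take $U_0:=\bigcup_c(U_c)_0$ to obtain $\wdht M\cap U_0\subset S$. The main obstacle is precisely this covering step: the local crosses $\BX_c$ that lie inside $\BX$ only cover a neighbourhood of the core, whereas $U_0$ must be a neighbourhood of the whole of $\BX$, including the arms $A'_j\times D_j\times A''_j$ that reach deep into the Riemann domains $D_j$. To reach a point of the $j$-th arm with $z_j$ far from $A_j$ I expect to propagate the inclusion along a chain of polydiscs joining $A_j$ to $z_j$ inside the connected domain $D_j$, exploiting the holomorphy of $f$ in the $j$-th block exactly as in the proof of Lemma \ref{LemMT}. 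Controlling this global geometry of $D_j$, rather than the local minimality argument above, is where the real work lies.
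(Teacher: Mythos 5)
Your reduction of \Case{E} to the single inclusion $\wdht M\cap U_0\subset S$ via Lemma \ref{LemMT}, and your treatment of a fixed local cross (apply the assumed full-hat case to get $\wdht M_c\subset S_c$, compare $\wdht f$ with $\wdht f_c$ on a non-pluripolar subset of the cross, and invoke the minimality \Case{C} of $\wdht M$ to conclude $\wdht M\cap\wdht{\BX_c}\subset\wdht M_c\subset S$), are essentially the paper's argument for a single local cross. But the proof is not complete: as you yourself note, the isotropic polydisc crosses $\BX_c$ only yield the inclusion near the core $A_1\ttimes A_N$, and reaching the arms is precisely the substance of the lemma. The chain-of-polydiscs propagation you sketch does not obviously work: the inclusion $\wdht M\cap W\subset S$ is not a property that continues from one polydisc of a chain to the next, since an irreducible component of $\wdht M$ passing near a point $(a'_j,z_j,a''_j)$ with $z_j$ deep inside $D_j$ need not meet the already-controlled region at all, and $S$ is analytic only on $U$ (not on all of $\wdht{\BX}$), so no identity principle is available to carry the inclusion along the chain. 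This is a genuine gap, not a routine verification.

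The paper sidesteps the problem by choosing the local cross \emph{asymmetrically}. For $a=(a'_N,a_N)\in A'_N\times D_N$ one takes a relatively compact subdomain of holomorphy $G_N\Subset D_N$ with $a_N\in G_N$ and $G_N\cap A_N\neq\varnothing$, and then $r>0$ with $\PP(a'_N,r)\times G_N\subset U$ (possible since $\{a'_N\}\times G_N\subset\BX\subset U$), and forms the cross $\BY:=\BX(\PP(a_1,r),\dots,\PP(a_{N-1},r),G_N;A_1\cap\PP(a_1,r),\dots,A_{N-1}\cap\PP(a_{N-1},r),A_N\cap G_N)$. Then $\wdht{\BY}\subset\PP(a'_N,r)\times G_N\subset U$, so the full-hat hypothesis (together with Lemma \ref{LemEF}) applies to $\BY$ and gives $\wdht M_{\BY}\subset S\cap\wdht{\BY}$; and, crucially, $a\in\wdht{\BY}$, because the first $N-1$ relative extremal functions vanish at $a_1,\dots,a_{N-1}$ while $h^\ast_{A_N\cap G_N,G_N}(a_N)<1$. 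Thus a single such cross already reaches any point of the arm, however far $a_N$ lies from $A_N$, and no propagation is needed; your minimality argument then goes through with $\BX_c$ replaced by $\BY$.
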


\begin{proof}
It suffices to show that for every $a\in\BX$ there exists an open neighborhood $U_a\subset U$
such that $\wdht M\cap U_a\subset S$.
We may assume that $a=(a_1,\dots,a_N)=(a'_N,a_N)\in A'_N\times D_N$. Let $G_N\Subset D_N$ be a domain
of holomorphy such that $G_N\cap A_N\neq\varnothing$, $a_N\in G_N$. Since
$\{a'_N\}\times G_N\subset\{a'_N\}\times D_N\subset\BX\subset U$, there exists an $r>0$
such that $\PP(a'_N,r)\times G_N\subset U$. Consider the $N$--fold cross
\begin{multline*}
\BY:=\BX(\PP(a_1,r),\dots,\PP(a_{N-1},r),G_N;\\ 
A_1\cap\PP(a_1,r),\dots,A_{N-1}\cap\PP(a_{N-1},r),A_N\cap G_N)\subset\BX.
\end{multline*}
Notice that $\wdht{\BY}\subset\PP(a'_N,r)\times G_N\subset U$. Consequently, the analytic set
$S_{\BY}:=S\cap\wdht{\BY}$ satisfies the special assumption ``$U=\wdht{\BX}$''
with respect to the cross $\BY$. Let $\wdht M_{\BY}$ be constructed according to \Case{A--D}
for $M_{\BY}:=S\cap\BY$. Using our assumption and Lemma \ref{LemEF}, we conclude that
$\wdht M_{\BY}\subset S_{\BY}$.

Since $a\in\wdht{\BY}$, it suffices to show that $\wdht M\cap\wdht{\BY}\subset\wdht M_{\BY}$.
Take an $f\in\CO_s(\BX\setminus M)$. Then
$f_{\BY}:=f|_{\BY\setminus M_{\BY}}\in\CO_s(\BY\setminus M_{\BY})$ and, therefore there exists
an $\wdht f_{\BY}\in\CO(\wdht{\BY}\setminus\wdht M_{\BY})$ with $\wdht f_{\BY}=f$ on
$\BY\setminus M_{\BY}$ (Lemma \ref{LemMT}). Since the set $\wdht M$ is singular, we must have
$\wdht M\cap\wdht{\BY}\subset\wdht M_{\BY}$.
\end{proof}

\begin{Lemma}\label{Lemh}
To prove \Case{E} with $U=\wdht{\BX}$ we may assume that $S=h^{-1}(0)$ with $h\in\CO(\wdht{\BX})$,
$h\not\equiv0$.
\end{Lemma}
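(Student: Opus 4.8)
The plan is to deduce the general case of \Case{E} (with $U=\wdht{\BX}$) from the special case in which the analytic set is the zero set of a single global function, so throughout I assume that the conclusion of \Case{E} (with $U=\wdht{\BX}$) is already available whenever $S=h^{-1}(0)$, $h\in\CO(\wdht{\BX})$, $h\not\equiv0$, and I derive it for an arbitrary proper analytic $S\subsetneq\wdht{\BX}$. First I would use that $\wdht{\BX}$, being a Riemann domain of holomorphy, is Stein, so by Cartan's Theorem~A the ideal sheaf $\mathcal I_S$ is generated by finitely many global sections $h_1,\dots,h_k\in\CO(\wdht{\BX})$; after discarding the trivial ones we may assume $h_i\not\equiv0$ and $\bigcap_{i=1}^k h_i^{-1}(0)=S$. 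The whole problem then reduces to proving, for each fixed $i$, the inclusion $\wdht M\cap U_0^{(i)}\subset h_i^{-1}(0)$ on a neighbourhood $U_0^{(i)}$ of $\BX$; intersecting over the finitely many $i$ and setting $U_0:=\bigcap_i U_0^{(i)}$ yields $\wdht M\cap U_0\subset S$, which by Lemma~\ref{LemMT} is exactly what remains to be shown (and then gives $\wdht f=f$ on $\BX\setminus M$).

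To treat a fixed $i$, let $\wdht f$, $\wdht M$ be the objects furnished by \Case{A}--\Case{D} for $M=S\cap\BX$, put $\wdtl S:=h_i^{-1}(0)\supset S$ and $\wdtl M:=\wdtl S\cap\BX\supset M$. Since the singular set has only grown, every $f\in\CO_s(\BX\setminus M)$ restricts to $\wdtl f:=f|_{\BX\setminus\wdtl M}\in\CO_s(\BX\setminus\wdtl M)$. Applying the already-available principal case of \Case{E} to $\wdtl S$, I obtain the minimal set $\wdht{\wdtl M}$ with $\wdht{\wdtl M}\cap U_0^{(i)}\subset\wdtl S$ together with an extension $\wdht{\wdtl f}\in\CO(\wdht{\BX}\setminus\wdht{\wdtl M})$ agreeing with $\wdtl f$ on $\BX\setminus\wdtl M$. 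The functions $\wdht f$ and $\wdht{\wdtl f}$ both coincide with $f$ on a non-pluripolar subset of $\BX$ (Remark~\ref{RemProp}), hence by the identity principle they agree on $\wdht{\BX}\setminus(\wdht M\cup\wdht{\wdtl M})$. Now take any point $p\in(\wdht M\cap U_0^{(i)})\setminus\wdtl S$: since $\wdht{\wdtl M}\cap U_0^{(i)}\subset\wdtl S$ we have $p\notin\wdht{\wdtl M}$, so $\wdht{\wdtl f}$ is holomorphic near $p$ and coincides with $\wdht f$ off $\wdht M$; thus $\wdht f$ extends holomorphically across $p$. As $f$ was arbitrary, no point of $(\wdht M\cap U_0^{(i)})\setminus\wdtl S$ is singular for the family $\wdht\CF$, which contradicts the minimality property \Case{C} unless this set is empty. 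Hence $\wdht M\cap U_0^{(i)}\subset\wdtl S=h_i^{-1}(0)$, as required.

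The step I expect to be most delicate is the bookkeeping needed to make the principal case genuinely applicable to $\wdtl S=h_i^{-1}(0)$. The standing hypothesis \Case{\dag} demands pluripolar fibers, but $h_i$ may vanish identically on some slices $\{a'_j\}\times D_j\times\{a''_j\}$; the saving fact is that, since $h_i\not\equiv0$, for each $j$ the set of offending parameters $(a'_j,a''_j)\in A'_j\times A''_j$ is contained in a proper analytic, hence pluripolar, subset, so one works with the generalized cross generated by these exceptional sets and \Case{\dag} is restored. A second point requiring care is to exhibit the overlap on which $\wdht f$ and $\wdht{\wdtl f}$ coincide as genuinely non-pluripolar --- this uses that both functions reproduce $f$ on the non-pluripolar traces of the relevant sub-crosses (Remark~\ref{RemProp}). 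Once these technicalities are settled, the conceptual heart of the argument is the transfer of information through the minimality of $\wdht M$: enlarging $S$ to the principal set $h_i^{-1}(0)$ can only remove singularities, so \Case{C} forces $\wdht M$ into $h_i^{-1}(0)$, and Theorem~A recovers $S$ as the intersection.
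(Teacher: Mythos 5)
Your proposal is correct and takes essentially the same route as the paper: write $S$ as the common zero set of finitely many global functions $h_i\in\CO(\wdht{\BX})$, apply the assumed principal case to each $h_i^{-1}(0)$, and use the minimality/singularity of $\wdht M$ to force $\wdht M$ into the intersection. The only differences are cosmetic --- the paper glues the extensions into a single function holomorphic on $\wdht{\BX}\setminus S$ and invokes singularity once rather than arguing pointwise for each $i$, and it leaves implicit the verification of \Case{\dag} for the enlarged singular sets $h_i^{-1}(0)\cap\BX$ that you spell out.
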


\begin{proof} Since $\wdht{\BX}$ is pseudoconvex, $S$ may be written as
$
S=\{z\in\wdht\BX:  h_1(z)=\dots=h_k(z)=0\},
$
where $h_j\in\CO(\wdht{\BX})$, $h_j\not\equiv0$, $j=1,\dots,k$.
Put $S_j:=h_j^{-1}(0)$, $M_j:=S_j\cap\BX$, $j=1,\dots,k$.
Take an $f\in\CO_s(\BX\setminus M)$. Observe that
$f_j:=f|_{\BX\setminus M_j}\in\CO_s(\BX\setminus M_j)$. We have assumed that
for each $j$ there exists an $\wdht f_j\in\CO(\wdht\BX\setminus S_j)$
such that $\wdht f_j=f$ on $\BX\setminus M_j$.
Gluing the functions $(\wdht f_j)_{j=1}^k$
leads to an $\wdtl f\in\CO(\wdht\BX\setminus S)$ with $\wdtl f=\wdht f_j$
on $\wdht\BX\setminus S_j$, $j=1,\dots,k$. Therefore, $\wdtl f=f$ on $\BX\setminus S$.
Since $\wdht M$ is singular, we must have $\wdht M\subset S$.
\end{proof}

After all above preparations we are ready for the main part of the proof.

\begin{proof}
We may assume that $S=h^{-1}(0)$ with $h\in\CO(\wdht{\BX})$, $h\not\equiv0$.
Of course, we may assume that $\wdht M\neq\varnothing$.
Thus $\wdht M$ is of pure codimension one.
Recall that we only know that $\wdht M\cap\BT'\subset M$ and $\wdht f=f$ on $\BT'\setminus M$.
Let $\wdht M_0$ be an irreducible component of $\wdht M$.
By the identity principle for analytic sets we only need to show
that $\varnothing\neq\Omega\cap\wdht M_0\subset S$ for an open set $\Omega\subset\wdht{\BX}$.

For every point $a\in\wdht M_0$ there exist an $\rho_a>0$ and a defining function
$g_a\in\CO(\PP(a,\rho_a))$ for $\wdht M_0\cap\PP(a,\rho_a)$ (cf.~\cite{Chi1989}, \S\;2.9),
in particular, $\wdht M_0\cap\PP(a,\rho_a)=g_a^{-1}(0)$.
Using the Lindel\"of theorem, we may find a sequence
$(a_k)_{k=1}^\infty$ such that $\wdht M_0\subset\bigcup_{k=1}^\infty\PP(a_k,\rho_{a_k})$.

To get the main idea of the proof assume first that

(*)\quad there exist $k\in\NN$, $j\in\{1,\dots,N\}$, and
a point $b=(b'_j,b_j,b''_j)\in\wdht M_0\cap\PP(a_k,\rho_{a_k})$ such that
$(b'_j,b''_j)\in(A'_j\times A''_j)\setminus\Sigma'_j$ and
$g_{a_k}(b'_j,b_{j,1},\dots,b_{j,n_j-1},\cdot,b''_j)\not\equiv0$
in $\PP((a_k)_{j,n_j},\rho_{a_k})$, where $\PP((a_k)_j,\rho_{a_k})\ni
z_j=(z_{j,1},\dots,z_{j,n_j})$ (in local
coordinates); observe that $b\in\wdht M\cap\BT'\subset S$.

We may assume that $j=N$. Put $a:=a_k$, $\rho:=\rho_{a_k}$, $g:=g_{a_k}$, $n:=n_1+\dots+n_N$.
Let $b=(\wdtl b,b_n)\in\CC^{n-1}\times\CC$
in local coordinates in $\PP(a,\rho)$. Consequently, we may assume that
for certain $\wdtl r, r_n>0$ with $\PP(\wdtl b,\wdtl r)\times\PP(b_n,r_n)\subset\PP(a,\rho)$
we have:

\bu $g(\wdtl b,\cdot)$ has in the disc $\PP(b_n,r_n)$ the only zero at $z_n=b_n$ with multiplicity $p$,

\bu for every $\wdtl z\in\PP(\wdtl b,\wdtl r)$ the function $g(\wdtl z,\cdot)$
has in $\PP(b_n, r_n)$ exactly $p$ zeros counted with multiplicities.

In particular, the projection
$\wdht M_0\cap(\PP(\wdtl b,\wdtl r)\times\PP(b_n,r_n))\ni (z',z_n)\overset{\pi}\too z'\in\PP(\wdtl b,\wdtl r)$
is proper. It is known that there exists a relatively closed pluripolar set
$\Sigma\subset\PP(\wdtl b,\wdtl r)$ such that
$\pi|_{\pi^{-1}(\PP(\wdtl b,\wdtl r)\setminus\Sigma)}:
\pi^{-1}(\PP(\wdtl b,\wdtl r)\setminus\Sigma)\too
\PP(\wdtl b,\wdtl r)\setminus\Sigma$ is a holomorphic covering (cf.~\cite{Chi1989}, \S\;2.8).
Let $C:=((A'_N\setminus\Sigma_N)\cap\PP(b'_N,\wdtl r))\times
\PP((b_{N,1},\dots,b_{N,n_N-1}),\wdtl r)\subset\PP(\wdtl b,\wdtl r)$;
it is clear that $C$ is locally pluriregular.

Thus there exist a $\wdtl c\in C$,
$\overset{\approx}r>0$, and $\phi:\PP(\wdtl c,\overset{\approx}r)\too\PP(b_n,r_n)$ holomorphic
such that $\PP(\wdtl c,\overset{\approx}r)\subset\PP(\wdtl b,\wdtl r)$ and
the graph $\{(\wdtl z,\phi(\wdtl z)): \wdtl z\in\PP(\wdtl c,\overset{\approx}r)\}$ is an open
part of $\wdht M_0$. Thus $h(\wdtl z,\phi(\wdtl z))=0$,
$\wdtl z\in C\cap\PP(\wdtl c,\overset{\approx}r)$. Hence
$h(\wdtl z,\phi(\wdtl z))=0$,
$\wdtl z\in\PP(\wdtl c,\overset{\approx}r)$, which means that
$(\wdtl c,\phi(\wdtl c))\in\Omega\cap\wdht M_0\subset S$ for an open set $\Omega\subset\wdht{\BX}$.

\halfskip

We move to the general case. Let
$$
C_{j,k}=(\pr_{D'_j\times D''_j}(\PP(a_k,\rho_{a_k})\cap\wdht M_0))\cap((A'_j\times A''_j)\setminus\Sigma'_j),
\quad j=1,\dots,N,\;k\in\NN.
$$
Suppose that all the sets $C_{j,k}$ are pluripolar. Put
$\Sigma''_j:=\Sigma'_j\cup\bigcup_{k=1}^\infty C_{j,k}$.
Then $\Sigma''_j$ is pluripolar, $j=1,\dots,N$. Let $\BT'':=\BT((D_j,A_j,\Sigma''_j)_{j=1}^N)$.
Observe that $\BT''\cap\wdht M_0=\varnothing$, which contradicts Lemma \ref{LemQT}.

Thus there exists a pair $(j,k)$ such that $C_{j,k}$ is not pluripolar. We may assume that $j=N$.
Put $a:=a_k$, $\rho:=\rho_{a_k}$, $g:=g_{a_k}$.
Notice that for every $b'_N\in C_{N,k}$ there exists a $b_N\in\PP(a_N,\rho)$ such that
$g(b'_N,b_N)=0$. Put
$$
V:=\{z'_N\in\PP(a'_N,\rho): g(z'_N,\cdot)\equiv0 \text{ on } \PP(a_N,\rho)\}.
$$
Then $V$ is a proper analytic set and, therefore, the set
$C_{N,k}\setminus V$ is not pluripolar.

In the case where $n_N=1$ it suffices to take an arbitrary
$b'_N\in C_{N,k}\setminus V$ and we are in the situation of (*).

If $n_N\geq2$, then take an arbitrary $b'_N\in C_{N,k}\setminus V$ and a $b_N\in\PP(a_N,\rho)$
such that $g(b)=0$ with $b:=(b'_N,b_N)$. Since $g(b'_N,\cdot)\not\equiv0$, there exist a unitary
isomorphism $\BU:\CC^{n_N}\too\CC^{n_N}$ and $r>0$ such that $\PP(b,r)\subset\PP(a,\rho)$ and
for each $\overset{\approx}\xi\in\PP(0,r)\subset\CC^{n_N-1}$,
we have $g(b'_N,b_N+\BU(\overset{\approx}\xi,\cdot))\not\equiv0$ near zero.
Define
$$
\wdtl g(z):=g(z'_N,b_N+\BU(z_N-b_N)),\quad z=(z'_N,z_N)\in\PP(b,r).
$$
Then $\wdtl g(b)=0$ and $\wdtl g(b'_N,b_{N,1},\dots,b_{N,n_N-1},\cdot)\not\equiv0$.
Moreover,
$$
\wdtl g^{-1}(0)\cap((A'_N\setminus\Sigma_N)\times\PP(b,r))\subset\wdtl h^{-1}(0),
$$
where $\wdtl h(z):=g(z'_N,b_N+\BU(z_N-b_N))$, $z=(z'_N,z_N)\in\PP(b,r)$.
Thus, the new objects satisfy (*). Consequently, repeating the procedure in (*),
we conclude that $b\in\wdtl\Omega\cap\wdtl g^{-1}(0)\subset\wdtl h^{-1}(0)$
for an open neighborhood $\wdtl\Omega$ of $b$, which means that
$b\in\Omega\cap g^{-1}(0)\subset h^{-1}(0)$ for an open neighborhood $\Omega$ of $b$.
\end{proof}

\def\bibname{References}
\bibliographystyle{amsplain}

\end{document}